\theoremstyle{plain}
\newtheorem{theorem}{Theorem}
\newtheorem{corollary}[theorem]{Corollary}
\newtheorem{lemma}[theorem]{Lemma}
\theoremstyle{definition}
\newcommand{\W}{{\mathcal W}}
\newcommand{\N}{{\mathbb N}}
\newcommand{\F}{{\mathcal F}}
\newcommand{\T}{{\mathcal T}}
\title{A $q$-analogue of the Biperiodic Fibonacci Sequence}
\author{Jos\'e L. Ram\'{\i}rez}
\address{\noindent Departamento  de Matem\'aticas, Universidad Sergio Arboleda, Bogot\'a,  COLOMBIA}
\email{josel.ramirez@ima.usergioarboleda.edu.co}
\urladdr{http://sites.google.com/site/ramirezrjl}
\author{V\'{\i}ctor F. Sirvent}
\address{\noindent Departamento de Matem\'aticas, Universidad Sim\'{o}n Bol\'{i}var, Apartado 89000, Caracas 1086-A, VENEZUELA.}
\email{vsirvent@usb.ve}
\urladdr{http://www.ma.usb.ve/\textasciitilde  vsirvent}
\date{\today}
\subjclass[2010]{Primary  ; Secondary .}
\keywords{$q$-Fibonacci sequence, $q$-biperiodic Fibonacci sequence, biperiodic Fibonacci sequence, $q$-analogues, combinatorial identities.}
\begin{document}
\begin{abstract}
The Fibonacci sequence has been generalized in many ways. One of them is defined by the relation $t_n=at_{n-1}+t_{n-2}$ if $n$ is even, $t_n=bt_{n-1}+t_{n-2}$ if $n$ is odd, with initial values  $t_0=0$ and  $t_1=1$, where $a$ and $b$ are positive integers. This sequence is called biperiodic Fibonacci sequence. In this paper, we introduce a $q$-analogue of this sequence. We prove several identities of $q$-analogues of the Fibonacci sequence.  We give algebraic and combinatorial  proofs.
\end{abstract}

\maketitle

\section{Introduction}

The Fibonacci numbers $F_n$ are defined  by the recurrence relation
\begin{align*}
F_{0}=0, \quad F_{1}=1,  \quad F_{n+1}=F_{n}+F_{n-1},  \ \ n\geqslant 1.
\end{align*}
This sequence and their generalizations have many interesting combinatorial properties (cf.  \cite{koshy}).  Many kinds of generalizations of Fibonacci numbers have been  presented in the literature. In particular, Edson  and Yayenie introduced the bi-periodic Fibonacci sequence \cite{edson1}.  For any two positive integers  $a$ and $b$, the bi-periodic Fibonacci sequence, say $\{t_{n}\}_{n\in \N}$, is determined by:
\begin{align}
t_{0}=0, \  \ t_{1}=1,  \   \  t_{n}=\begin{cases} at_{n-1} + t_{n-2},  \ \text{if} \ n\equiv 0 \pmod 2;\\
bt_{n-1} + t_{n-2},  \ \text{if} \  n\equiv 1\pmod 2;
\end{cases} \ n\geqslant 2. \label{eq1}
\end{align}
 These numbers and their generalizations have been studied in several papers; among other references, see  \cite{ALP, Bil, edson3,  edson1, NURE,   Panario, Panario2, Ram, edson2}. Yayenie \cite{edson2} found the following explicit formula to  bi-periodic Fibonacci numbers
  \begin{align}
t_{n}=a^{\xi(n-1)}\sum_{i=0}^{\left\lfloor\frac{n-1}{2}\right\rfloor}\binom{n-i-1}{i}(ab)^{\left\lfloor\frac{n-1}{2}\right\rfloor-i},  \label{ec20}
\end{align}
where $\xi(n)=n-2\lfloor  n/2 \rfloor$, i.e., $\xi(n)=0$ when $n$ is even and $\xi(n)=1$ when $n$ is odd. \\

There exists several slightly different $q$-analogues of the Fibonacci sequence, among other references, see, \cite{Andrews2, Bel, Bel2, Carlitz, Carlitz2, Cigler, Cigler2, Cigler3, Cigler4, Sagan}. In particular, Schur \cite{Schur} defined the following polynomials

\begin{align}
D_0(q)=0, \quad D_1(q)=1, \quad  D_n(q)=D_{n-1}(q)+q^{n-2}D_{n-2}(q). \label{shurp}
\end{align}

It is clear that $D_n(1)=F_n$.  Besides the recurrence formula (\ref{shurp}), $D_n(q)$ can be calculated directly by the following analytic formula (cf.  \cite{Andrews2, Cigler})
\begin{align}
D_n(q)=\sum_{j=0}^{\left\lfloor\frac{n-1}{2}\right\rfloor}{n - j -1 \brack j}q^{j^2},
\end{align}
  where the $q$-binomial is  $${n \brack k}:=\frac{(q;q)_n}{(q;q)_k(q;q_{n-k})},$$ and
$$(a;q)_n:=\prod_{j=0}^{n-1}(1-aq^j).$$
Another way to write the $q$-binomial is
 $$ {n \brack k}= \frac{ [n]_q!}{ [k]_q! [n-k]_q!},$$ with
$[n]_q=1+q+\cdots  + q^{n-1}$ and  $[n]_q!=[1]_q[2]_q\cdots [n]_q$.\\

One of the main applications of the polynomial as  $D_n(q)$ is the giving alternatives proofs of the  Rogers-Ramanujan identities; among other references, see \cite{Andrews4, Andrews2, Andrews3,  Andrews, Chan, Prodinger}.

A natural question is: What is the $q$-analogue of the biperiodic sequence?  From Definition (\ref{shurp}), we introduce a $q$-analogue of the biperiodic Fibonacci sequence as follows

\begin{align}
F_{0}(q)=0, \  \ F_{1}(q)=1,  \   \  F_{n}(q)=\begin{cases} aF_{n-1}(q) + q^{n-2}F_{n-2}(q),  \ \text{if} \ n\equiv 0 \pmod 2;\\
bF_{n-1}(q) + q^{n-2}F_{n-2}(q),  \ \text{if} \  n\equiv 1\pmod 2;
\end{cases} \ n\geqslant 2. \label{eqq1}
\end{align}

The first few terms are
$$0, \quad 1,  \quad a,   \quad a b + q,  \quad a^2 b + a q + a q^2,  \quad
a^2 b^2 + a b q + a b q^2 + a b q^3 + q^4, \dots$$

We call this sequence \emph{$q$-biperiodic Fibonacci sequence}.
It is clear that if $a=b=1$ we obtain the polynomials $D_n(q)$.

In the present article, we study the $q$-biperiodic Fibonacci sequence. We obtain new recurrence relations, new   combinatorial identities and the generating function of the $q$-biperiodic Fibonacci sequence. Finally, we introduce the tilings weighted by bicolored squares, then we give several combinatorial proof of some identities.

\section{Some Identities}

The following theorem gives a $q$-analogue of the identity (\ref{ec20}).  This is a close formula to evaluate the $q$-biperiodic Fibonacci sequence. Moreover, in Theorem \ref{theo4} we give a $q$-analogue of a generalization of Cassini's identity.

\begin{theorem}\label{Theo1}
The following equality holds for any integer $n\geqslant 0:$
\begin{align*}
F_n(q)=a^{\xi(n-1)}\sum_{l=0}^{\left\lfloor\frac{n-1}{2}\right\rfloor}{n-l-1 \brack l}(ab)^{\left\lfloor\frac{n-1}{2}\right\rfloor-l}q^{l^2}.
\end{align*}
\end{theorem}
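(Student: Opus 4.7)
I would prove the identity by induction on $n$, paralleling Yayenie's argument for the (non-$q$) biperiodic Fibonacci numbers but carrying along the $q^{l^2}$ factor. Denote by $G_n(q)$ the right-hand side of the claimed identity. The base cases are straightforward: for $n=0$ the sum is empty so $G_0(q)=0$; for $n=1$ only $l=0$ contributes, with $\xi(0)=0$, giving $G_1(q)=1$; and for $n=2$ one obtains $G_2(q)=a=F_2(q)$. This matches $F_0(q), F_1(q), F_2(q)$ from \eqref{eqq1}.

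For the inductive step I would split on the parity of $n$ and, in each case, substitute the inductive hypothesis into the recurrence \eqref{eqq1}. Suppose first $n$ is even, so $\xi(n-1)=1$, $\lfloor (n-1)/2\rfloor=(n-2)/2$, while $\xi(n-2)=0$ and $\xi(n-3)=1$. The recurrence reads $F_n(q)=aF_{n-1}(q)+q^{n-2}F_{n-2}(q)$; after substituting the inductive formulas and pulling out the common factor $a$, the goal reduces to
\begin{align*}
\sum_{l=0}^{(n-2)/2}{n-l-1\brack l}(ab)^{\frac{n-2}{2}-l}q^{l^2}
&=\sum_{l=0}^{(n-2)/2}{n-l-2\brack l}(ab)^{\frac{n-2}{2}-l}q^{l^2}\\
&\quad +q^{n-2}\sum_{l=0}^{(n-4)/2}{n-l-3\brack l}(ab)^{\frac{n-4}{2}-l}q^{l^2}.
\end{align*}
The crucial tool is the $q$-Pascal rule
\[
{n-l-1\brack l}={n-l-2\brack l}+q^{n-2l-1}{n-l-2\brack l-1}.
\]
Plugging this into the left-hand side splits it into two sums; the first coincides with the first sum on the right, and in the second a reindexing $l\mapsto m+1$ converts the exponent $(n-2l-1)+l^2$ into $(n-2)+m^2$ (since $(n-2(m+1)-1)+(m+1)^2=(n-2)+m^2$), producing exactly the $q^{n-2}F_{n-2}(q)$ contribution. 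The case $n$ odd is entirely analogous, using $F_n(q)=bF_{n-1}(q)+q^{n-2}F_{n-2}(q)$: there $F_{n-1}$ carries an $a^{\xi(n-2)}=a$ which combines with $b$ to produce the needed $(ab)$-power.

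The only real bookkeeping obstacle is tracking the $a^{\xi(n-1)}$ prefactor together with the shifts in the floors $\lfloor (n-k)/2\rfloor$ as $n$ moves through an even versus odd step; once that is sorted, the algebraic core is a single application of $q$-Pascal plus the exponent identity $(n-2l-1)+l^2=(n-2)+(l-1)^2$. A minor point to verify is that the boundary term at $l=(n-1)/2$ vanishes in the odd case, which is immediate from ${\frac{n-3}{2}\brack \frac{n-1}{2}}=0$. No step requires anything beyond elementary $q$-binomial manipulations, so I do not expect any serious obstruction.
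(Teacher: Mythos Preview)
Your proposal is correct and follows essentially the same route as the paper: both arguments are inductions on $n$ that substitute the hypothesis into the recurrence \eqref{eqq1} and then invoke the $q$-Pascal rule ${m\brack j}={m-1\brack j}+q^{m-j}{m-1\brack j-1}$ together with the exponent identity $(n-2l-1)+l^2=(n-2)+(l-1)^2$. The only cosmetic difference is that the paper treats both parities simultaneously by writing the recurrence as $F_{n+1}(q)=a^{\xi(n)}b^{1-\xi(n)}F_n(q)+q^{n-1}F_{n-1}(q)$ and carrying the $\xi$-bookkeeping through the computation, whereas you split explicitly into the even and odd cases; the underlying algebra is identical.
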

\begin{proof}
We proceed by induction on $n$. The equality clearly holds for $n=0, 1$. Now suppose that the result is true for all $i\leqslant n$.

\begin{align*}
F_{n+1}(q)&=a^{\xi(n)}b^{1-\xi(n)}F_{n}(q)  + q^{n-1}F_{n-1}(q)\\
&=a^{\xi(n)}b^{1-\xi(n)}a^{\xi(n-1)}\sum_{l=0}^{\left\lfloor\frac{n-1}{2}\right\rfloor}{n-l-1 \brack l}(ab)^{\left\lfloor\frac{n-1}{2}\right\rfloor-l}q^{l^2} + q^{n-1}a^{\xi(n-2)}\sum_{l=0}^{\left\lfloor\frac{n-2}{2}\right\rfloor}{n-l-2 \brack l}(ab)^{\left\lfloor\frac{n-2}{2}\right\rfloor-l}q^{l^2}\\
&=ab^{\xi(n+1)}\sum_{l=0}^{\left\lfloor\frac{n-1}{2}\right\rfloor}{n-l-1 \brack l}(ab)^{\left\lfloor\frac{n-1}{2}\right\rfloor-l}q^{l^2}  + q^{n-1}a^{\xi(n)}\sum_{l=0}^{\left\lfloor\frac{n-2}{2}\right\rfloor}{n-l-2 \brack l}(ab)^{\left\lfloor\frac{n-2}{2}\right\rfloor-l}q^{l^2}\\
&=a^{\xi(n)}(ab)^{\xi(n+1)}\sum_{l=0}^{\left\lfloor\frac{n-1}{2}\right\rfloor}{n-l-1 \brack l}(ab)^{\left\lfloor\frac{n-1}{2}\right\rfloor-l}q^{l^2}  + q^{n-1}a^{\xi(n)}\sum_{l=0}^{\left\lfloor\frac{n-2}{2}\right\rfloor}{n-l-2 \brack l}(ab)^{\left\lfloor\frac{n}{2}\right\rfloor-l-1}q^{l^2}\\
&=a^{\xi(n)}\left(\sum_{l=0}^{\left\lfloor\frac{n-1}{2}\right\rfloor}{n-l-1 \brack l}(ab)^{\left\lfloor\frac{n-1}{2}\right\rfloor-l+\xi(n+1)}q^{l^2}  + q^{n-1}\sum_{l=1}^{\left\lfloor\frac{n}{2}\right\rfloor}{n-l-1 \brack l-1}(ab)^{\left\lfloor\frac{n}{2}\right\rfloor-l}q^{(l-1)^2}\right)\\
&=a^{\xi(n)}\left((ab)^{\left\lfloor\frac{n-1}{2}\right\rfloor+\xi(n+1)}+\sum_{l=1}^{\left\lfloor\frac{n-1}{2}\right\rfloor}{n-l-1\brack l}(ab)^{\left\lfloor\frac{n-1}{2}\right\rfloor-l+\xi(n+1)}q^{l^2} \right. \\&
\ \ \left. + q^{n-1}(1-\xi(n))q^{((n-2)/2)^2} +q^{n-1}\sum_{l=1}^{\left\lfloor\frac{n-1}{2}\right\rfloor}{n-l-1 \brack l-1}(ab)^{\left\lfloor\frac{n}{2}\right\rfloor-l}q^{(l-1)^2}\right).
\end{align*}
Note that $\left\lfloor\frac{n-1}{2}\right\rfloor+\xi(n+1)=\left\lfloor\frac{n}{2}\right\rfloor$  and from the well known recurrence of the $q$-binomial coefficient
\begin{align}
{n\brack j}=q^{n-j}{n-1\brack j-1}  + {n-1\brack j},
\end{align}
we get
\begin{align*}
F_{n+1}(q)&=a^{\xi(n)}\left((ab)^{\left\lfloor\frac{n}{2}\right\rfloor}+ q^{\frac{n^2}{4}}(1-\xi(n))  + \sum_{l=1}^{\left\lfloor\frac{n-1}{2}\right\rfloor}\left({n-l-1\brack l}+{n-l-1\brack l-1}q^{n-2l}\right)(ab)^{\left\lfloor\frac{n}{2}\right\rfloor-l}q^{l^2}\right) \\
&=a^{\xi(n)}\left((ab)^{\left\lfloor\frac{n}{2}\right\rfloor}+ q^{\frac{n^2}{4}}(1-\xi(n))  + \sum_{l=1}^{\left\lfloor\frac{n-1}{2}\right\rfloor}{n-l\brack l}(ab)^{\left\lfloor\frac{n}{2}\right\rfloor-l}q^{l^2}\right) \\
&=a^{\xi(n)}\sum_{l=0}^{\left\lfloor\frac{n}{2}\right\rfloor}{n-l\brack l}(ab)^{\left\lfloor\frac{n}{2}\right\rfloor-l}q^{l^2}.
\end{align*}
\end{proof}

 Note that if we consider the limit when $q$ tends to $1$ in above Theorem  we obtain the identity  (\ref{ec20}).

\begin{corollary}[Andrews, \cite{Andrews2}]
The following equality holds for any integer $n\geqslant 0:$
\begin{align*}
D_n(q)=\sum_{l=0}^{\left\lfloor\frac{n-1}{2}\right\rfloor}{n-l-1 \brack l}q^{l^2}.
\end{align*}
\end{corollary}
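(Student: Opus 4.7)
The plan is very short: the corollary is nothing more than the specialization of Theorem \ref{Theo1} at $a=b=1$. First I would observe that, with these parameter values, the defining recurrence (\ref{eqq1}) for $F_n(q)$ collapses to Schur's recurrence (\ref{shurp}), and the initial values $F_0(q)=0$, $F_1(q)=1$ match $D_0(q)=0$, $D_1(q)=1$. A one-line induction (already remarked in the paper right after the definition of $F_n(q)$) then gives $F_n(q)=D_n(q)$ for all $n\geqslant 0$ when $a=b=1$.

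Next I would substitute $a=b=1$ into the closed formula of Theorem \ref{Theo1}. The prefactor $a^{\xi(n-1)}$ becomes $1^{\xi(n-1)}=1$, and every weight $(ab)^{\lfloor (n-1)/2\rfloor - l}$ inside the sum also collapses to $1$. What remains is exactly
$$D_n(q)=F_n(q)\Big|_{a=b=1}=\sum_{l=0}^{\lfloor (n-1)/2\rfloor}{n-l-1 \brack l}q^{l^2},$$
which is the stated identity.

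There is no genuine obstacle: all of the combinatorial and inductive work has been carried out in the proof of Theorem \ref{Theo1}, and the corollary amounts to reading off a specialization. If one insisted on a self-contained proof, one could rerun the induction of Theorem \ref{Theo1} with $a=b=1$; the parity case distinction and the $\xi$-factors disappear entirely, and the key step reduces to the standard $q$-Pascal relation
$${n-l\brack l} = {n-l-1\brack l} + q^{n-2l}{n-l-1\brack l-1},$$
which is already invoked in the theorem's proof. Either route yields the identity immediately.
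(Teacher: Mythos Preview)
Your proposal is correct and matches the paper's approach exactly: the corollary is stated without proof, as an immediate specialization of Theorem~\ref{Theo1} at $a=b=1$, relying on the remark (made just after defining $F_n(q)$) that $F_n(q)=D_n(q)$ in this case. Nothing more is needed.
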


Now, we are going to prove a $q$-analogue of this Fibonacci identity:
$$F_{m-1}F_{m+k}-F_{m+k-1}F_m=(-1)^mF_k.$$

We follow the ideas of Andrews and et. al \cite{Andrews}.  We need the auxiliary sequence $\widehat{F}_{n}(q)$ defined by
\begin{align}
\widehat{F}_{0}(q)=1, \  \ \widehat{F}_{1}(q)=0,  \   \  \widehat{F}_{n}(q)=\begin{cases} a\widehat{F}_{n-1}(q) + q^{n-2}\widehat{F}_{n-2}(q),  \ \text{if} \ n\equiv 0 \pmod 2;\\
b\widehat{F}_{n-1}(q) + q^{n-2}\widehat{F}_{n-2}(q),  \ \text{if} \  n\equiv 1\pmod 2,
\end{cases} \label{eqq2}
\end{align}
where $n\geqslant 2$.  The first few terms are
$$1, \quad 0, \quad 1, \quad b, \quad a b + q^2, \quad a b^2 + b q^2 + b q^3,  \quad a^2 b^2 + a b q^2 + a b q^3 + a b q^4 + q^6, \dots$$

\begin{theorem}
The following equality holds for any integer $n\geqslant 1$
\begin{align*}
\widehat{F}_n(q)=b^{\xi(n)}\sum_{l=0}^{\left\lfloor\frac{n-2}{2}\right\rfloor}{n-l-2 \brack l}(ab)^{\left\lfloor\frac{n-2}{2}\right\rfloor-l}q^{l^2+l}.
\end{align*}
\end{theorem}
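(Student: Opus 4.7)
The plan is to mimic the inductive proof of Theorem~\ref{Theo1}, adjusted for the initial values $\widehat{F}_0=1$, $\widehat{F}_1=0$ and for the shifted $q$-exponent $q^{l^2+l}$ in place of $q^{l^2}$. I would induct on $n$. The base cases $n=1,2$ follow immediately from (\ref{eqq2}): for $n=1$ the stated sum is empty (since $\lfloor -1/2\rfloor=-1$), giving $\widehat{F}_1(q)=0$, and for $n=2$ it reduces to the single summand $l=0$, namely $1$.

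For the inductive step I would use
\begin{align*}
\widehat{F}_{n+1}(q)=a^{\xi(n)}b^{1-\xi(n)}\widehat{F}_n(q)+q^{n-1}\widehat{F}_{n-1}(q),
\end{align*}
substitute the inductive formulas for $\widehat{F}_n$ and $\widehat{F}_{n-1}$, and pull out the common prefactor $b^{\xi(n+1)}=b^{1-\xi(n)}$. What remains is the combination $(ab)^{\xi(n)}\Sigma_1+q^{n-1}\Sigma_2$, where $\Sigma_1$ and $\Sigma_2$ are the sums appearing in the inductive hypothesis. The elementary identity $\lfloor(n-2)/2\rfloor+\xi(n)=\lfloor(n-1)/2\rfloor$ then arranges the powers of $ab$ in $(ab)^{\xi(n)}\Sigma_1$ to match those of the target sum exactly.

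After reindexing $\Sigma_2$ by $l\mapsto l-1$, its generic $q$-exponent transforms as $q^{n-1+(l-1)^2+(l-1)}=q^{l^2+l}\cdot q^{n-2l-1}$, which is precisely the factor demanded by the $q$-Pascal recurrence
\begin{align*}
{n-l-1\brack l}=q^{n-2l-1}{n-l-2\brack l-1}+{n-l-2\brack l}.
\end{align*}
This recurrence collapses the paired $q$-binomials at each shared index $l$ into ${n-l-1\brack l}$, yielding the target expression. Parity bookkeeping shows that when $n$ is odd the reindexed $\Sigma_2$ extends one step beyond $\Sigma_1$, producing a boundary contribution at $l=(n-1)/2$ whose $q$-exponent simplifies to $(n^2-1)/4$; this exactly fills in the top summand of the target (where ${(n-1)/2\brack(n-1)/2}=1$). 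When $n$ is even, the two ranges coincide and no overflow term arises.

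The main obstacle, as in the proof of Theorem~\ref{Theo1}, is not any single algebraic step but the careful bookkeeping of even/odd cases, of the floor functions, of the prefactors $a^{\xi(\cdot)}$ and $b^{\xi(\cdot)}$, and of the asymmetric boundary term at the top of the sum. Once these are tracked correctly, the $q$-Pascal identity above closes the induction in one stroke.
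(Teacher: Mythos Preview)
Your proposal is correct and follows exactly the approach the paper intends: the paper's own proof is simply ``The proof runs like in Theorem~\ref{Theo1},'' and your write-up supplies precisely those details (inductive step via the recurrence, extraction of the common prefactor $b^{\xi(n+1)}$, the floor identity $\lfloor(n-2)/2\rfloor+\xi(n)=\lfloor(n-1)/2\rfloor$, the reindexing of $\Sigma_2$, and closure via the $q$-Pascal rule with the parity-dependent boundary term). Nothing further is needed.
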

\begin{proof}
The proof runs like in  Theorem \ref{Theo1}.
\end{proof}
\begin{theorem} \label{theo4}
The following equality holds for integers $n\geqslant 0$ and $k\geqslant 1$,
\begin{multline}
F_n(q)\widehat{F}_{n+k}(q)-F_{n+k}(q)\widehat{F}_{n}(q)\\
=(-1)^{n-1}q^{\binom n2}\sum_{j=0}^{k-1}{k-1-j\brack j} a^{\left\lfloor\frac{k-1}{2}\right\rfloor + \xi(k+1)\xi(n+1)-j}b^{\left\lfloor\frac{k-1}{2}\right\rfloor + \xi(k+1)\xi(n)-j}q^{j^2+nj}. \label{theorog}
\end{multline}
\end{theorem}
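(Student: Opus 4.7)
Fix $n$ and induct on $k$. Writing $c_m := a$ if $m$ is even and $c_m := b$ if $m$ is odd, both $F_m(q)$ and $\widehat F_m(q)$ obey the common recurrence $u_m = c_m u_{m-1}+q^{m-2}u_{m-2}$. Substituting this into $G_k(n) := F_n(q)\widehat F_{n+k}(q)-F_{n+k}(q)\widehat F_n(q)$ at the $F_{n+k}$ and $\widehat F_{n+k}$ terms and collecting, one obtains immediately
\[
G_k(n) = c_{n+k}\,G_{k-1}(n) + q^{n+k-2}\,G_{k-2}(n)\qquad(k\geqslant 2),\qquad G_0(n)=0.
\]
The same manipulation at $k=1$ gives $G_1(n) = -q^{n-1}\,G_1(n-1)$; combined with $G_1(0)=-1$ this yields the closed form $G_1(n)=(-1)^{n-1}q^{\binom n 2}$, which is the $k=1$ instance of the claimed identity (the $k=0$ instance is the empty sum).

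For the inductive step it suffices to check that the right-hand side $H_k(n)$ of the stated identity satisfies the same recurrence in $k$. Setting $\alpha_k := \lfloor(k-1)/2\rfloor + \xi(k+1)\xi(n+1)$ and $\beta_k := \lfloor(k-1)/2\rfloor + \xi(k+1)\xi(n)$, the verification hinges on the two bookkeeping identities
\[
c_{n+k} = a^{\alpha_k - \alpha_{k-1}}b^{\beta_k - \beta_{k-1}}, \qquad \alpha_{k-2} = \alpha_k - 1,\quad \beta_{k-2} = \beta_k - 1,
\]
each proved by a short case split on the parities of $k$ and $n$ (using $\xi(k+1)=\xi(k-1)$). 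Granting these, $c_{n+k}H_{k-1}(n)$ becomes a sum with common weight $a^{\alpha_k-j}b^{\beta_k-j}q^{j^2+nj}$ and coefficient ${k-2-j\brack j}$, while the shift $j\mapsto j-1$ in $H_{k-2}(n)$ — which converts $q^{n+k-2+(j-1)^2+n(j-1)}$ into $q^{j^2+nj+k-1-2j}$ — produces the same weight with coefficient $q^{k-1-2j}{k-2-j\brack j-1}$. The $q$-Pascal identity
\[
{k-1-j\brack j} = {k-2-j\brack j}+q^{k-1-2j}{k-2-j\brack j-1}
\]
then collapses the two pieces into exactly the sum defining $H_k(n)$, closing the induction.

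The only nontrivial step is the parity bookkeeping: the identity $c_{n+k}=a^{\alpha_k-\alpha_{k-1}}b^{\beta_k-\beta_{k-1}}$ splits into four cases according to the parities of $k$ and $n$. Each case is a one-line verification, but together they require care. After that, the argument reduces to the $q$-Pascal recurrence and a routine index shift.
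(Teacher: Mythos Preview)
Your proposal is correct and follows essentially the same route as the paper: both fix $n$, show that the left- and right-hand sides satisfy the common three-term recurrence $X_k=a^{\xi(n+k+1)}b^{\xi(n+k)}X_{k-1}+q^{n+k-2}X_{k-2}$ (your $c_{n+k}$ is exactly $a^{\xi(n+k+1)}b^{\xi(n+k)}$), and close the argument with the same $q$-Pascal identity after the shift $j\mapsto j-1$. Your write-up is in fact slightly more complete than the paper's, since you explicitly establish the base case $G_1(n)=(-1)^{n-1}q^{\binom n2}$ by the auxiliary recursion $G_1(n)=-q^{n-1}G_1(n-1)$, whereas the paper merely asserts that the initial conditions match.
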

\begin{proof}
Let $g_{n,k}$ be the right side of (\ref{theorog}) and  $f_{n,k}$ be the left side of (\ref{theorog}). Sequences $\{g_{n,k}\}$ and $\{f_{n,k}\}$ satisfy the same recurrence  and the same initial conditions.  We are going to prove that the sequence $\{g_{n,k}\}$ satisfies the  recurrence $g_{n,k}-a^{\xi(n+k+1)}b^{\xi(n+k)}g_{n,k-1}=q^{n-k+2}g_{n,k-2}$.

In fact,

\begin{align*}
g_{n,k}&-a^{\xi(n+k+1)}b^{\xi(n+k)}g_{n,k-1}\\
&=(-1)^nq^{\binom n2} \left( \sum_{j=0}^{k-1}\left({k-1-j\brack j}a^{\left\lfloor\frac{k-1}{2}\right\rfloor + \xi(k+1)\xi(n+1)-j}b^{\left\lfloor\frac{k-1}{2}\right\rfloor + \xi(k+1)\xi(n)-j}\right.\right.\\
& \ -\left.\left.{k-2-j\brack j} a^{\left\lfloor\frac{k-2}{2}\right\rfloor + \xi(k)\xi(n+1)+\xi(n+k+1)-j}b^{\left\lfloor\frac{k-2}{2}\right\rfloor + \xi(k)\xi(n)+\xi(n+k)-j} \right)q^{j^2+nj}\right) \\
&=(-1)^nq^{\binom n2} \left( \sum_{j=0}^{k-1}{k-1-j\brack j}a^{\left\lfloor\frac{k-2}{2}\right\rfloor + \xi(k)+\xi(k+1)\xi(n+1)-j}b^{\left\lfloor\frac{k-2}{2}\right\rfloor +\xi(k) +\xi(k+1)\xi(n)-j}\right.\\
& \  -\left.{k-2-j\brack j} a^{\left\lfloor\frac{k-2}{2}\right\rfloor + \xi(k)+\xi(k+1)\xi(n+1)-j}b^{\left\lfloor\frac{k-2}{2}\right\rfloor +\xi(k) +\xi(k+1)\xi(n)-j} \right)q^{j^2+nj} \\
&=(-1)^nq^{\binom n2} \left( \sum_{j=0}^{k-1}\left({k-1-j\brack j} - {k-2-j\brack j}\right)a^{\left\lfloor\frac{k-2}{2}\right\rfloor + \xi(k)+\xi(k+1)\xi(n+1)-j}b^{\left\lfloor\frac{k-2}{2}\right\rfloor +\xi(k) +\xi(k+1)\xi(n)-j} \right)q^{j^2+nj} \\
&=(-1)^nq^{\binom n2} \sum_{j=0}^{k-2}q^{k-1-2j}{k-2-j\brack j-1} a^{\left\lfloor\frac{k-2}{2}\right\rfloor + \xi(k)+\xi(k+1)\xi(n+1)-j}b^{\left\lfloor\frac{k-2}{2}\right\rfloor +\xi(k) +\xi(k+1)\xi(n)-j}q^{j^2+nj} \\
&=(-1)^nq^{\binom n2} \sum_{j=0}^{k-2}{k-2-j\brack j-1} a^{\left\lfloor\frac{k-2}{2}\right\rfloor + \xi(k)+\xi(k+1)\xi(n+1)-1-j}b^{\left\lfloor\frac{k-2}{2}\right\rfloor +\xi(k) +\xi(k+1)\xi(n)-1-j}q^{(j+1)^2+n+k-2} \\
&=(-1)^nq^{\binom n2 + n + k-2} \sum_{j=0}^{k-3}{k-3-j\brack j} a^{\left\lfloor\frac{k-3}{2}\right\rfloor + \xi(k-1)\xi(n+1)-j}b^{\left\lfloor\frac{k-3}{2}\right\rfloor  +\xi(k-1)\xi(n)-j}q^{j^2+nj} \\
&=q^{n+k-2}g_{n,k-2}.
\end{align*}

On the other hand,
\begin{align*}
f_{n,k}&=F_n(q)\widehat{F}_{n+k}(q)-F_{n+k}(q)\widehat{F}_{n}(q)\\
&=\begin{vmatrix} F_n(q) & F_{n+k}(q) \\
\widehat{F}_{n}(q) & \widehat{F}_{n+k}(q)
\end{vmatrix}\\
&=\begin{vmatrix} F_n(q) & a^{\xi(n+k+1)}b^{\xi(n+k)}F_{n+k-1}(q) + q^{n+k-2}F_{n+k-2} \\
\widehat{F}_{n}(q) & a^{\xi(n+k+1)}b^{\xi(n+k)}\widehat{F}_{n+k-1}(q) + q^{n+k-2}\widehat{F}_{n+k-2}
\end{vmatrix}\\
&=a^{\xi(n+k+1)}b^{\xi(n+k)}f_{n,k-1}+q^{n+k-2}f_{n,k-2}.
\end{align*}
Then the Equation (\ref{theorog}) follows.
\end{proof}
In particular, if $k=1$
\begin{align}
F_n(q)\widehat{F}_{n+1}(q)-F_{n+1}(q)\widehat{F}_{n}(q)=(-1)^{n-1}q^{\binom n2}.
\end{align}

If $a=b=1$ we obtain the  the following corollary.
\begin{corollary}[Andrews and et al. \cite{Andrews}]
The following equality holds for integers $n\geqslant 0$ and $k\geqslant 1$,
\begin{align}
F_n(q)\widehat{F}_{n+k}(q)-F_{n+k}(q)\widehat{F}_{n}(q)=(-1)^{n}q^{\binom n2}\sum_{j=0}^{k-1}{k-1-j\brack j} q^{j^2+nj}.
\end{align}
\end{corollary}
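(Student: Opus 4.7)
The plan is to derive the corollary as the immediate $a=b=1$ specialization of Theorem \ref{theo4}. No new induction or manipulation of recurrences is required; one only has to track how each ingredient of the general identity collapses.

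First I would observe that when $a=b=1$ both defining recurrences \eqref{eqq1} and \eqref{eqq2} reduce to the uniform form $X_n(q)=X_{n-1}(q)+q^{n-2}X_{n-2}(q)$, so $F_n(q)$ becomes Schur's polynomial $D_n(q)$ while $\widehat{F}_n(q)$ becomes its companion sequence with initial conditions $\widehat{F}_0(q)=1$, $\widehat{F}_1(q)=0$. Hence the left-hand side of the corollary coincides literally with the $a=b=1$ specialization of the left-hand side of Theorem \ref{theo4}.

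Next I would substitute $a=b=1$ on the right-hand side of \eqref{theorog}. The product
$$a^{\lfloor (k-1)/2\rfloor + \xi(k+1)\xi(n+1)-j}\, b^{\lfloor (k-1)/2\rfloor + \xi(k+1)\xi(n)-j}$$
evaluates to $1$ for every $j$, $k$, and $n$, regardless of whether the exponent is positive, zero, or negative, so the double sum collapses to
$$(-1)^{n-1} q^{\binom n 2}\sum_{j=0}^{k-1}{k-1-j\brack j}q^{j^2+nj}.$$
This matches the displayed corollary up to the sign convention $(-1)^n$ versus $(-1)^{n-1}$, which is a typographical matter: the two expressions agree after interchanging the two terms of the determinant $F_n(q)\widehat{F}_{n+k}(q)-F_{n+k}(q)\widehat{F}_n(q)$.

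There is no genuine obstacle in the argument. The only cautions are to confirm that the powers of $a$ and $b$ trivialize uniformly (which is automatic since $1$ raised to any integer is $1$) and to keep the overall sign consistent with the convention chosen in the corollary.
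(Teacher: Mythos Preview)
Your proposal is correct and matches the paper's approach exactly: the paper simply writes ``If $a=b=1$ we obtain the following corollary'' immediately after Theorem~\ref{theo4}, with no further argument. Your remark about the sign is also apt---the $(-1)^n$ in the corollary is inconsistent with the $(-1)^{n-1}$ in Theorem~\ref{theo4} from which it is obtained, so it is indeed a typographical slip in the paper rather than anything substantive.
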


\section{Generating Function}
Edson and Yeyenie \cite{edson1} found the generating function to the biperiodic Fibonacci sequence:
\begin{align}
\F(x):=\sum_{n=0}^{\infty}t_nx^n=\frac{x(1+ax-x^2)}{1-(ab+2)x^2+x^4}. \label{genfun}
\end{align}
In this section we follow the ideas of Andrews \cite{Andrews2} to find a $q$-analogue of  (\ref{genfun}).  The Fibonacci operator $\eta$ was introduced by Andrews \cite{Andrews2}  by $\eta f(x)=f(qx)$. Moreover, we define the operator $\eta_2$ by $\eta_2f(x)=(qx)^2f(qx)$.

\begin{theorem}\label{theogen}
The generating function for the $q$-biperiodic Fibonacci sequence  is
\begin{align}
\W(x):=\sum_{n=0}^{\infty}F_n(q)x^n=\frac{1}{1-bx-x^2\eta}\left(x+(a-b)xf(x)\right),  \label{genfun1}
\end{align}
where $1/(1-bx-x^2\eta)$ is the inverse operator of $1-bx-x^2\eta$, and
\begin{align}
f(x):=\sum_{n=1}^{\infty}F_{2n-1}(q)x^{2n-1}=\frac{1}{1-abx^2-(1+1/q)x^2\eta + x^2\eta_2}\left(x-x^3\right). \label{genfun2}
\end{align}
\end{theorem}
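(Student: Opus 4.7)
I would first split $\W(x)=\W_{e}(x)+\W_{o}(x)$ into its even- and odd-indexed parts, noting that $\W_{o}(x)=f(x)$. Multiplying each branch of (\ref{eqq1}) by $x^{n}$ and summing---using the observation that, at the coefficient level, the shift $F_{n}\mapsto q^{n-2}F_{n-2}$ is implemented on generating functions by the operator $x^{2}\eta$---yields the coupled operator identities
\begin{align*}
(1-x^{2}\eta)\W_{o}(x)=x+bx\W_{e}(x), \qquad (1-x^{2}\eta)\W_{e}(x)=ax\W_{o}(x),
\end{align*}
where the inhomogeneities $x$ and $0$ reflect $F_{1}(q)=1$ and $F_{0}(q)=0$ respectively.

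Adding the two identities and rewriting $bx\W_{e}+ax\W_{o}=bx\W+(a-b)xf$ gives $(1-bx-x^{2}\eta)\W(x)=x+(a-b)xf(x)$, which is exactly (\ref{genfun1}).

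For (\ref{genfun2}) I would eliminate $\W_{e}$ to obtain a self-contained equation for $f(x)=\W_{o}(x)$. The cleanest route is to derive a three-term recurrence on the odd-indexed subsequence: substituting the even recurrence $F_{2n-2}=aF_{2n-3}+q^{2n-4}F_{2n-4}$ into the odd recurrence $F_{2n-1}=bF_{2n-2}+q^{2n-3}F_{2n-3}$ and using the preceding odd step $bF_{2n-4}=F_{2n-3}-q^{2n-5}F_{2n-5}$ to remove the stray $F_{2n-4}$ yields, for $n\geqslant 3$,
\begin{align*}
F_{2n-1}(q)=(ab+q^{2n-3}+q^{2n-4})F_{2n-3}(q)-q^{4n-9}F_{2n-5}(q).
\end{align*}
Multiplying by $x^{2n-1}$ and summing over $n\geqslant 3$, the three groups on the right translate into $ab x^{2}f(x)$, $(1+1/q)x^{2}\eta f(x)$ (the factor $1+1/q$ being precisely the pair $q^{2n-3}+q^{2n-4}$ after extracting the operator shift), and the $x^{2}\eta_{2}f(x)$ contribution; while the residual terms from $F_{1}x+F_{3}x^{3}$ reduce to the inhomogeneity $x-x^{3}$, delivering (\ref{genfun2}).

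The principal obstacle is the operator noncommutativity $\eta\cdot x=qx\cdot\eta$, which forces careful bookkeeping of $q$-shifts when assembling the coupled equations into (\ref{genfun1}) and when converting the scalar three-term recurrence into the operator identity (\ref{genfun2}); in particular, matching the $q^{4n-9}F_{2n-5}$ contribution to the $x^{2}\eta_{2}$ operator requires verifying that the two $q$-shifts and two $x$-shifts produce exactly the prescribed $q$-power after using the commutation relation.
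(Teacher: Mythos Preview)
Your proposal is correct and takes essentially the same approach as the paper. For (\ref{genfun1}) the paper applies $1-bx-x^{2}\eta$ directly to $\W(x)$ and uses the two branches of the recurrence to reduce the residual to $x+(a-b)xf(x)$, which is your split-and-add argument without explicitly naming $\W_{e},\W_{o}$; for (\ref{genfun2}) the paper derives the identical three-term odd-index recurrence $F_{2n+1}=(ab+q^{2n-1}(1+1/q))F_{2n-1}-q^{4n-5}F_{2n-3}$ by the same substitution chain you describe and then reads off the operator identity with residual $x-x^{3}$, exactly as you outline.
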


\begin{proof}
We begin with the formal power series representation of the generating function for $\left\{F_n(q)\right\}_{n\in\N}$,
\begin{align*}
\W(x)=F_0(q)+F_1(q)x+F_2(q)x^2+\cdots + F_k(q)x^k + \cdots.\\
\end{align*}
Note that
\begin{align*}
bx\W(x)&=\sum_{n=1}^{\infty}bF_{n-1}(q)x^n,\\
x^2\eta\W(x)&=\sum_{n=2}^{\infty}F_{n-2}(q)q^{n-2}x^n.\end{align*}
Since $F_{2k+1}(q)=bF_{2k}+q^{2k-1}F_{2k-1}$, we get
\begin{align*}
(1-bx-x^2\eta)\W(x)=x+\sum_{n=1}^{\infty}(F_{2n}(q)-bF_{2n-1}(q)-q^{2n-2}F_{2n-2}(q))x^{2n}.
\end{align*}
Since $F_{2k}(q)=aF_{2k-1}+q^{2k-2}F_{2k-2}$, we get
\begin{align*}
(1-bx-x^2\eta)\W(x)&=x+(a-b)\sum_{n=1}^{\infty}F_{2n-1}(q)x^{2n}=x+(a-b)xf(x).
\end{align*}
Then Equation (\ref{genfun1}) is clear. \\
On the other hand,
\begin{align*}
F_{2n+1}(q)&=bF_{2n}(q)+q^{2n-1}F_{2n-1}(q)\\
&=b(aF_{2n-1}(q)+q^{2n-2}F_{2n-2}(q))+q^{2n-1}F_{2n-1}(q)\\
&=(ab+q^{2n-1})F_{2n-1}(q)+bq^{2n-2}F_{2n-2}(q)\\
&=(ab+q^{2n-1})F_{2n-1}(q)+q^{2n-2}(F_{2n-1}(q)-q^{2n-3}F_{2n-3}(q))\\
&=(ab+q^{2n-1}(1+1/q))F_{2n-1}(q)-q^{4n-5}F_{2n-3}(q).
\end{align*}
Then
\begin{align*}
(1-abx^2-(1+1/q)x^2\eta + x^2\eta_2)f(x)=x+F_3(q)x^3-abx^3-(1+1/q)qx^3=x-x^3.
\end{align*}
Therefore Equation (\ref{genfun2}) is obtained.
\end{proof}

If $a=b=1$ we obtain the following corollary
\begin{corollary}[Andrews, \cite{Andrews2}]
The generating function for the $q$-Fibonacci polynomials is
$$\sum_{n=0}^{\infty}D_n(q)x^n=\frac{x}{1-x-x^2\eta}.$$
\end{corollary}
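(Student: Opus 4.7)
The plan is to deduce this corollary directly from Theorem \ref{theogen} by specializing the two parameters $a$ and $b$ to $1$. First I would observe that when $a = b = 1$, the defining recurrence (\ref{eqq1}) collapses to the single recurrence $F_n(q) = F_{n-1}(q) + q^{n-2}F_{n-2}(q)$, which is exactly Schur's recurrence (\ref{shurp}); combined with identical initial values $F_0(q) = 0$, $F_1(q) = 1$, this gives $F_n(q) = D_n(q)$ for all $n$. So the left-hand side of the desired identity is just $\W(x)$ from Theorem \ref{theogen} evaluated at $a = b = 1$.

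Next I would apply formula (\ref{genfun1}) at $a = b = 1$. The crucial point is that the coefficient $(a - b)$ of the $xf(x)$ term vanishes, so the auxiliary series $f(x)$ from (\ref{genfun2}) drops out entirely and I do not need its specialization at all. What remains is
\begin{align*}
\sum_{n=0}^{\infty} D_n(q) x^n \;=\; \frac{1}{1 - x - x^2 \eta}\bigl(x + 0\bigr) \;=\; \frac{x}{1 - x - x^2 \eta},
\end{align*}
which is the stated identity. The only thing worth double-checking is that the inverse operator $1/(1 - bx - x^2\eta)$ is well defined as a formal power series operator in $x$ (its action on $x$ is the geometric-series expansion $\sum_{k\geq 0}(bx + x^2\eta)^k \cdot x$), and that substituting $a = b = 1$ into this operator indeed produces $1/(1 - x - x^2\eta)$ applied to $x$; both are immediate from the construction in the proof of Theorem \ref{theogen}. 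There is no real obstacle here — the corollary is essentially a one-line specialization, with the pleasant feature that the more complicated auxiliary generating function $f(x)$ disappears in the symmetric case $a = b$.
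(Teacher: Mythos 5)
Your proposal is correct and matches the paper's (implicit) argument exactly: the paper states this corollary as the immediate specialization $a=b=1$ of Theorem \ref{theogen}, under which $F_n(q)=D_n(q)$ and the $(a-b)xf(x)$ term vanishes. Nothing further is needed.
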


\begin{lemma}\label{lemmag}
The following equality holds for any integer  $n\geqslant 0$.
\begin{align}
(bx+x^2\eta)^nx=x^{n+1}\sum_{j=0}^{n}b^{n-j}x^jq^{j^2}{n\brack j}.  \label{lemma1}
\end{align}
\end{lemma}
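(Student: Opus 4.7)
I would prove Lemma \ref{lemmag} by induction on $n$, exploiting the $q$-Pascal recurrence ${n+1\brack j} = {n\brack j} + q^{n+1-j}{n\brack j-1}$ that the authors have already invoked in the proof of Theorem \ref{Theo1}.

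The base case $n=0$ is immediate since both sides equal $x$. For the inductive step, assume the identity holds for $n$ and apply the operator $bx+x^2\eta$ to the right-hand side. The first piece is straightforward:
\begin{align*}
bx\cdot x^{n+1}\sum_{j=0}^{n}b^{n-j}x^jq^{j^2}{n\brack j}
= x^{n+2}\sum_{j=0}^{n}b^{n+1-j}x^jq^{j^2}{n\brack j}.
\end{align*}
For the second piece, I would carefully unwind how $\eta$ acts on each monomial. Since $\eta(x^{n+1+j})=q^{n+1+j}x^{n+1+j}$, we get
\begin{align*}
x^2\eta\!\left(x^{n+1}\sum_{j=0}^{n}b^{n-j}x^jq^{j^2}{n\brack j}\right)
= x^{n+3}\sum_{j=0}^{n}b^{n-j}x^jq^{j^2+j+n+1}{n\brack j}.
\end{align*}

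The main bookkeeping step is then to reindex this second sum by $j\mapsto j-1$ so that both sums carry the factor $x^{n+2+j}$ with the same exponent structure. Doing so produces
\begin{align*}
x^{n+2}\sum_{j=1}^{n+1}b^{n+1-j}x^jq^{j^2+n+1-j}{n\brack j-1}
= x^{n+2}\sum_{j=0}^{n+1}b^{n+1-j}x^jq^{j^2}\,q^{n+1-j}{n\brack j-1},
\end{align*}
after which combining with the first piece gives
\begin{align*}
x^{n+2}\sum_{j=0}^{n+1}b^{n+1-j}x^jq^{j^2}\left({n\brack j}+q^{n+1-j}{n\brack j-1}\right).
\end{align*}
One application of the $q$-Pascal identity collapses the parenthesized factor to ${n+1\brack j}$, delivering the claim for $n+1$.

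The only real obstacle is checking that the index shift lines up cleanly at both endpoints, so I would verify separately that the $j=0$ term of the first sum (contributing $b^{n+1}x^{n+2}$) and the $j=n+1$ term of the shifted second sum (contributing $x^{2n+3}q^{(n+1)^2}$) match, respectively, the $j=0$ and $j=n+1$ terms of the claimed right-hand side at level $n+1$. Once the exponent arithmetic $j^2+j+n+1=j^2+(n+1-j)+2j$ is recorded in the form used above, the rest is automatic.
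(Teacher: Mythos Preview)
Your proof is correct and follows essentially the same route as the paper's: induction on $n$, splitting $(bx+x^2\eta)$ applied to the inductive hypothesis into two sums, reindexing the $\eta$-piece, and collapsing via the $q$-Pascal recurrence ${n+1\brack j}={n\brack j}+q^{n+1-j}{n\brack j-1}$. The paper's version is slightly terser (it does not separately verify the boundary terms), but the argument is the same.
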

\begin{proof}
We proceed by induction on $n$. The equality clearly holds for $n=0$. Now suppose that the result is true for all $i\leqslant n$.
\begin{align*}
(bx+x^2\eta)^{n+1}x&=(bx+x^2\eta)(bx+x^2\eta)^{n}x=(bx+x^2\eta)x^{n+1}\sum_{j=0}^{n}b^{n-j}x^jq^{j^2}{n\brack j}\\
&=bx^{n+2}\sum_{j=0}^{n}b^{n-j}x^jq^{j^2}{n\brack j} +x^{n+3}\sum_{j=0}^{n}b^{n-j}x^jq^{j^2+j+n+1}{n\brack j}\\
&=bx^{n+2}\left(\sum_{j\geqslant0}b^{n-j}x^jq^{j^2}\left({n\brack j} +q^{n+1-j}{n\brack j-1}\right)\right)\\
&=x^{n+2}\left(\sum_{j\geqslant0}^{n}b^{n+1-j}x^jq^{j^2}{n+1\brack j}\right).
\end{align*}
Then the Equation (\ref{lemma1}) follows.
\end{proof}

\begin{theorem} The generating function for the $q$-biperiodic  Fibonacci sequence can be expressed
as
\begin{align}
\sum_{n=0}^{\infty}F_n(q)x^n=\sum_{j=0}^{\infty}\left(1+(a-b)f(x)\right)\frac{q^{j^2}x^{2j+1}}{(bx;q)_{j+1}}.
\end{align}
\end{theorem}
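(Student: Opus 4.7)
\emph{Plan.} I would start from Theorem~\ref{theogen}, which expresses $\W(x) = \frac{1}{1-bx-x^2\eta}\bigl(x+(a-b)xf(x)\bigr)$. Interpreting this operator identity explicitly yields the functional equation
$$(1-bx)\W(x) - x^2\W(qx) = x\bigl(1+(a-b)f(x)\bigr),$$
which I would solve for $\W(x)$ in the form
$$\W(x) = \frac{x(1+(a-b)f(x))}{1-bx} + \frac{x^2}{1-bx}\W(qx).$$

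The second step is to iterate this relation by substituting the analogous formula for $\W(qx)$, then for $\W(q^2x)$, and so on. After $N$ iterations one obtains
$$\W(x) = \sum_{j=0}^{N-1}\frac{q^{j^2}x^{2j+1}\bigl(1+(a-b)f(q^jx)\bigr)}{(bx;q)_{j+1}} + \frac{x^{2N}q^{N(N-1)}}{(bx;q)_N}\W(q^Nx),$$
where the $q^{j^2}$ factor arises from combining $\prod_{k=0}^{j-1}(q^kx)^2 = x^{2j}q^{j(j-1)}$ with the leading $q^j$ coming from $P(q^jx) = q^jx(1+(a-b)f(q^jx))$, since $j(j-1)+j=j^2$. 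Because $\W(x)$ is a formal power series with $\W(0)=0$, the remainder contributes only to powers $x^n$ with $n\geq 2N+1$, so letting $N\to\infty$ in the formal power series topology yields the claimed identity.

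As an alternative, one can first establish the simpler building block
$$\frac{1}{1-bx-x^2\eta}\cdot x = \sum_{j=0}^{\infty}\frac{q^{j^2}x^{2j+1}}{(bx;q)_{j+1}}$$
using Lemma~\ref{lemmag}: expand the inverse as $\sum_{n\geq 0}(bx+x^2\eta)^n$, apply Lemma~\ref{lemmag} to each inner term, swap the order of summation via the substitution $m=n-j$, and finally collapse the resulting inner sum through the standard $q$-Cauchy identity $\sum_{m\geq 0}{m+j\brack j}z^m = 1/(z;q)_{j+1}$ with $z=bx$. The contribution coming from the $(a-b)xf(x)$ part of the numerator is then absorbed by the same operator machinery.

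The principal difficulty is keeping the bookkeeping of the shifts $x\mapsto q^jx$ consistent at every stage, since $\eta$ does not commute with multiplication by $f(x)$; each iteration of the functional equation produces an extra shift that must be carried into the next step. Once the iteration is correctly set up, the collection of $q$-powers through $j(j-1)+j=j^2$ and the telescoping of the denominators into $(bx;q)_{j+1}$ both emerge mechanically, and the argument reduces to verifying that the remainder vanishes in the formal power series topology.
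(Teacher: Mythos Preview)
Your \emph{alternative} approach is exactly the paper's proof: expand $(1-bx-x^{2}\eta)^{-1}$ as the geometric series $\sum_{n\ge 0}(bx+x^{2}\eta)^{n}$, apply Lemma~\ref{lemmag} to each summand, reindex via $m=n-j$, and collapse the inner sum using the $q$-binomial series $\sum_{m\ge 0}{m+j\brack j}(bx)^{m}=1/(bx;q)_{j+1}$. The paper handles the $(a-b)xf(x)$ contribution by carrying the factor $f(x)$ through this computation unchanged---precisely what your phrase ``absorbed by the same operator machinery'' gestures at.

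Your \emph{primary} (iteration) approach, however, does not prove the stated identity. Iterating
\[
\W(x)=\frac{x\bigl(1+(a-b)f(x)\bigr)}{1-bx}+\frac{x^{2}}{1-bx}\,\W(qx)
\]
produces, as your own displayed formula shows, the shifted factor $f(q^{j}x)$ in the $j$-th summand, not the unshifted $f(x)$ that appears in the theorem. You correctly flag this as ``the principal difficulty,'' but you offer no mechanism to convert $\sum_{j}f(q^{j}x)\,q^{j^{2}}x^{2j+1}/(bx;q)_{j+1}$ into $f(x)\sum_{j}q^{j^{2}}x^{2j+1}/(bx;q)_{j+1}$, and in general no such identity holds termwise. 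As written, the iteration yields a different (if related) expansion; if you mean it as your main argument, that step is a genuine gap. The paper avoids the issue entirely by never iterating the functional equation and instead going straight through Lemma~\ref{lemmag}, where $f(x)$ is treated as an inert multiplicative factor.
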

\begin{proof}
From Theorem \ref{theogen} and Lemma \ref{lemmag}  we have
\begin{align*}
\sum_{n=0}^{\infty}F_n(q)x^n&=\frac{1}{1-bx-x^2\eta}\left(x+(a-b)xf(x)\right)\\
&=\sum_{n=0}^{\infty}(bx+x^2\eta)^n(x+(a-b)xf(x))\\
&=\sum_{n=0}^{\infty}(bx+x^2\eta)^nx+(a-b)\sum_{n=0}^{\infty}(bx+x^2\eta)^nxf(x)\\
&=\sum_{n=0}^{\infty}x^{n+1}\sum_{j=0}^{n}b^{n-j}x^jq^{j^2}{n\brack j}+ (a-b)\sum_{n=0}^{\infty}x^{n+1}\sum_{j=0}^{n}b^{n-j}x^jq^{j^2}{n\brack j}f(x)\\
&=\sum_{n=0}^{\infty}\sum_{j=0}^{n}b^{n-j}x^{n+1+j}q^{j^2}{n\brack j}+ (a-b)\sum_{n=0}^{\infty}\sum_{j=0}^{n}b^{n-j}x^{n+1+j}q^{j^2}{n\brack j}f(x)\\
&=\sum_{n,j\geqslant 0}b^{n}x^{n+1+2j}q^{j^2}{n+j\brack j}+ (a-b)\sum_{n,j\geqslant 0}b^{n}x^{n+1+2j}q^{j^2}{n+j\brack j}f(x)
\end{align*}
From the $q$-analogue of the binomial series
$$\sum_{i=0}^{\infty}{n+i\brack i}x^i=\frac{1}{(x;q)_{n+1}},$$
we obtain
\begin{align*}
\sum_{n=0}^{\infty}F_n(q)x^n=\sum_{j =0}^{\infty}x^{2j+1}q^{j^2}\frac{1}{(bx;q)_{j+1}}+ (a-b)\sum_{j=0}^{\infty}x^{2j+1}q^{j^2}\frac{1}{(bx;q)_{j+1}}f(x).
\end{align*}
\end{proof}

\begin{corollary}[Andrews, \cite{Andrews2}]
The generating function for the $q$-Fibonacci polynomials is
\begin{align}
\sum_{n=0}^{\infty}D_n(q)x^n=\sum_{j=0}^{\infty}\frac{q^{j^2}x^{2j+1}}{(x;q)_{j+1}}.
\end{align}

\end{corollary}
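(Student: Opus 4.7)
The plan is to obtain this corollary as the direct $a=b=1$ specialization of the generating function theorem just proved. First I would confirm that $F_n(q)$ collapses to $D_n(q)$ under this substitution: when $a=b=1$ the two cases of the recurrence (\ref{eqq1}) coalesce into $F_n(q) = F_{n-1}(q) + q^{n-2}F_{n-2}(q)$, which together with $F_0(q)=0$ and $F_1(q)=1$ is exactly Schur's recurrence (\ref{shurp}) defining $D_n(q)$.

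Next I would substitute $a=b=1$ directly into the formula of the preceding theorem. Two welcome simplifications occur simultaneously. The coefficient $(a-b)$ in front of $f(x)$ vanishes, so the entire auxiliary term built from the odd-indexed generating function disappears. At the same time, the $q$-Pochhammer symbol $(bx;q)_{j+1}$ reduces to $(x;q)_{j+1}$. The left-hand side becomes $\sum_{n\geqslant 0}D_n(q)x^n$, and the right-hand side reduces to $\sum_{j\geqslant 0} q^{j^2}x^{2j+1}/(x;q)_{j+1}$, which is the claimed identity.

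There is no real obstacle: the corollary is a one-line consequence by substitution, and no separate computation or induction is needed. The only conceptual point worth noting is that when $a=b$ the biperiodicity genuinely disappears, so the auxiliary series $f(x)$ introduced in Theorem \ref{theogen} to handle the asymmetry between even and odd indices is simply not needed, and the elaborate structure of the preceding theorem degenerates gracefully to Andrews' classical formula.
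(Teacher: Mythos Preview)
Your proposal is correct and follows precisely the approach intended by the paper: the corollary is stated immediately after the theorem with no separate proof, and the implied argument is exactly the $a=b=1$ specialization you carry out. The two key observations---that $(a-b)f(x)$ vanishes and that $(bx;q)_{j+1}$ reduces to $(x;q)_{j+1}$---are all that is needed.
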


\section{Combinatorial Interpretation}
The Fibonacci  numbers $F_{n+1}$  can be interpreted as the number of tilings of a board of length $n$ ($n$-board) with cells labelled 1 to $n$ from left to right with only squares and dominoes (cf. \cite{Benja}). This interpretation has been used to give a combinatorial interpretation of the $q$-Fibonacci polynomials and similar recurrent polynomials,  see, for instance \cite{Shatu, Briggs, Kris, Little, Stabel}.
In this section, we use tilings weighted by bicolored squares to give a combinatorial interpretation of the $q$-biperiodic Fibonacci sequence. We define a tiling weighted by bicolored squares as a tiling of a $n$-board by colored squares and non-colored dominoes, such that if the square has an odd position then there are $a$ different colors  to choose for the square. If the square has an even position then there are $b$ different colors to choose for the square. Moreover, if  a domino covering the $i$-th boundary receives weight $q^i$ (by $i$th boundary, we mean the boundary between cells $i$ and $i+1$, $1\leqslant i \leqslant n-1$).  Let $\T_n$ denote the set of all $n$-tilings, we shall show  Theorem \ref{theocount} that the biperiodic Fibonacci sequence $F_{n+1}(q)$ counts the number of tilings weighted by bicolored squares of a $n$-board.  Specifically, we have
$$F_{n+1}(q)=\sum_{T\in\T_n}a^{|T|_a}b^{|T|_b}q^{|T|},$$
where $|T|_a$ ($|T|_b$)  is the sum of all $i$ such that $T$ has a square in an odd position $i$ (even position $i$), and  $|T|$ is the sum of all $i$ such that $T$ has a domino in position $(i,i+1)$.\\

For example, in Figure \ref{tile} we show the different ways to tiling a $4$-board. Then it is clear that $F_5(q)=a^2b^2+abq+abq^2+abq^3+q^4.$

        \begin{figure}[h]
        \centering
   \includegraphics[scale=1]{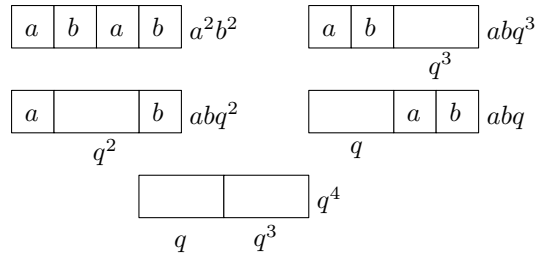}
      \caption{Different ways to tile $4$-boards.}
      \label{tile}
       \end{figure}

          \begin{theorem} \label{theocount}
For $n\geqslant 0$, $F_{n+1}(q)$ counts the number of tilings weighted by bicolored squares of a $n$-board.
\end{theorem}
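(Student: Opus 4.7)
The plan is to proceed by induction on $n$, setting $G_n(q) := \sum_{T\in\T_n} a^{|T|_a} b^{|T|_b} q^{|T|}$ and showing that $G_n(q)$ satisfies the same recurrence and initial conditions as $F_{n+1}(q)$. For the base cases: an empty board has the unique empty tiling with weight $1 = F_1(q)$, and a $1$-board admits only a single square at position $1$ (odd), contributing $a = F_2(q)$. These match.

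For the inductive step I would condition on the last tile of a tiling $T \in \T_n$. If cell $n$ is covered by a square, then the remaining tiles form a tiling of the $(n-1)$-board, and the new square contributes a factor $a$ when $n$ is odd or $b$ when $n$ is even, i.e., a factor $a^{\xi(n)} b^{1-\xi(n)}$, with no new domino contribution. If cells $n-1$ and $n$ are covered by a domino, then it sits on the $(n-1)$-st boundary and contributes weight $q^{n-1}$, while the remaining tiles form a tiling of the $(n-2)$-board. These two cases are disjoint and exhaustive, so
\begin{equation*}
G_n(q) = a^{\xi(n)} b^{1-\xi(n)} G_{n-1}(q) + q^{n-1} G_{n-2}(q).
\end{equation*}

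On the other side, from the definition (\ref{eqq1}) applied to index $n+1$, we have $F_{n+1}(q) = a F_n(q) + q^{n-1}F_{n-1}(q)$ when $n+1$ is even (i.e.\ $n$ is odd, so $\xi(n)=1$) and $F_{n+1}(q) = b F_n(q) + q^{n-1}F_{n-1}(q)$ when $n+1$ is odd ($\xi(n)=0$). In both cases this is exactly $a^{\xi(n)}b^{1-\xi(n)}F_n(q) + q^{n-1}F_{n-1}(q)$, the same recurrence as for $G_n(q)$. By the inductive hypothesis $G_{n-1}(q) = F_n(q)$ and $G_{n-2}(q) = F_{n-1}(q)$, so $G_n(q) = F_{n+1}(q)$, completing the induction.

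There is no serious obstacle here; the argument is a standard combinatorial recurrence-matching. The only step that requires care is aligning the parity conventions: the recurrence for $F_n(q)$ in (\ref{eqq1}) is indexed by the parity of $n$, whereas the tiling recurrence above is naturally indexed by the parity of the last cell, so one must shift indices consistently and check that the color multiplier $a^{\xi(n)}b^{1-\xi(n)}$ coming from the square at position $n$ agrees with the coefficient appearing in the definition of $F_{n+1}(q)$. Once this bookkeeping is done the sanity check against the Figure~\ref{tile} expansion $F_5(q) = a^2b^2 + abq + abq^2 + abq^3 + q^4$ confirms the identification.
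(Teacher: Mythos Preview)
Your proof is correct and follows essentially the same approach as the paper: condition on whether the last cell is covered by a square or by a domino, and verify that the resulting recurrence and initial values coincide with those defining $F_{n+1}(q)$. Your write-up is simply more explicit about the induction hypothesis and the parity bookkeeping than the paper's terse version.
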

\begin{proof}
Given $T\in\T_n$. If $n$ is even and $T$ ends with a domino, there  are $q^{n-1}F_{n-1}(q)$ ways to tile the board, and if $T$ ends with a square, there are $bF_n(q)$ ways to tile the $n$-board. Analogously, if $n$ is odd we get $F_{n+1}(q)=aF_{n}(q) + q^{n-1}F_{n-1}$. Moreover, it is clear that the initial values are 1 and $a$.
\end{proof}

 Let $g(n,k)$ be the number of tilings weighted by bicolored squares having $n$ tiles and $k$ dominoes. Then
 $$g(n,k)=q^{n+k-1}g(n-1,k-1) + a^{\xi(n+k)}b^{1-\xi(n+k)}g(n-1,k).$$

 In fact, if the $n+k$-board ends in a domino,  the domino contributes $q^{n+k-1}$ to the weight. Then, there are $q^{n+k-1}g(n-1,k-1)$ ways  to tile the board. If the last tile is a square there are $a^{\xi(n+k)}b^{1-\xi(n+k)}g(n-1,k)$ ways to tile the board. Let
 $$h(n,k)=a^{\xi(n+k)}q^{k^2}{n\brack k}(ab)^{\left\lfloor \frac{n-k}{2}\right\rfloor}.$$
The sequence $h(n,k)$ satisfies the same recurrence of $g(n,k)$. In fact,

\begin{align*}
q^{n+k-1}&h(n-1,k-1) + a^{\xi(n+k)}b^{1-\xi(n+k)}h(n-1,k)\\
&=q^{n+k-1}\left(a^{\xi(n+k)}q^{(k-1)^2}{n-1\brack k-1}(ab)^{\left\lfloor \frac{n-k}{2}\right\rfloor}\right) + a^{\xi(n+k)}b^{1-\xi(n+k)}a^{\xi(n+k-1)}q^{k^2}{n-1\brack k}(ab)^{\left\lfloor \frac{n-k-1}{2}\right\rfloor}\\
&=q^{k^2+n-k}a^{\xi(n+k)}{n-1\brack k-1}(ab)^{\left\lfloor \frac{n-k}{2}\right\rfloor} + ab^{1-\xi(n+k)}q^{k^2}{n-1\brack k}(ab)^{\left\lfloor \frac{n-k}{2}\right\rfloor - \xi(n+k+1)}\\
&=q^{k^2+n-k}a^{\xi(n+k)}{n-1\brack k-1}(ab)^{\left\lfloor \frac{n-k}{2}\right\rfloor} + a^{1-\xi(n+k+1)}q^{k^2}{n-1\brack k}(ab)^{\left\lfloor \frac{n-k}{2}\right\rfloor }\\
&=q^{k^2+n-k}a^{\xi(n+k)}{n-1\brack k-1}(ab)^{\left\lfloor \frac{n-k}{2}\right\rfloor} + a^{\xi(n+k)}q^{k^2}{n-1\brack k}(ab)^{\left\lfloor \frac{n-k}{2}\right\rfloor }\\
&=a^{\xi(n+k)}q^{k^2}\left(q^{n-k}{n-1\brack k-1}+{n-1\brack k}\right)(ab)^{\left\lfloor \frac{n-k}{2}\right\rfloor}\\
&=a^{\xi(n+k)}q^{k^2}{n\brack k}(ab)^{\left\lfloor \frac{n-k}{2}\right\rfloor}=h(n,k).
\end{align*}
Moreover, they satisfy the same initial conditions.  Therefore, we have the following lemma.

 \begin{lemma}\label{lema}
 The number of  tilings weighted by bicolored squares having $n$ tiles and $k$ dominoes is
 $$a^{\xi(n+k)}q^{k^2}{n\brack k}(ab)^{\left\lfloor \frac{n-k}{2}\right\rfloor}.$$
 \end{lemma}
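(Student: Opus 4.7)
The plan is to prove the formula by induction on $n$, following the standard combinatorial strategy of matching the combinatorial count $g(n,k)$ with a candidate closed form $h(n,k)$ and showing that both satisfy the same recurrence and the same initial conditions. Set
$$h(n,k) := a^{\xi(n+k)}q^{k^2}{n\brack k}(ab)^{\lfloor(n-k)/2\rfloor}.$$
It suffices to derive a recurrence for $g(n,k)$ by conditioning on the last tile, verify $h$ satisfies the same recurrence, and compare base cases.

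First I would peel off the last tile of a tiling of an $(n+k)$-board with $n$ squares and $k$ dominoes. If the last tile is a domino it covers cells $n+k-1$ and $n+k$ and so crosses the $(n+k-1)$-th boundary, contributing weight $q^{n+k-1}$; the rest is a tiling on an $(n+k-2)$-board with $n-1$ squares and $k-1$ dominoes. If the last tile is a square, its colour weight is $a$ when $n+k$ is odd and $b$ when $n+k$ is even, i.e.\ $a^{\xi(n+k)}b^{1-\xi(n+k)}$, and what remains is counted by $g(n-1,k)$. This gives
$$g(n,k)=q^{n+k-1}g(n-1,k-1)+a^{\xi(n+k)}b^{1-\xi(n+k)}g(n-1,k).$$

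The bulk of the work is to show $h$ obeys the same relation. For the first term, $\xi(n+k-2)=\xi(n+k)$ and the floor is preserved, so $q^{n+k-1}h(n-1,k-1)=a^{\xi(n+k)}q^{k^2+n-k}{n-1\brack k-1}(ab)^{\lfloor(n-k)/2\rfloor}$. For the second term, the parity identities $\xi(n+k-1)=1-\xi(n+k)$ and $\lfloor(n-k-1)/2\rfloor=\lfloor(n-k)/2\rfloor-(1-\xi(n+k))$ combine so that the prefactor $a\cdot b^{1-\xi(n+k)}\cdot(ab)^{-(1-\xi(n+k))}$ collapses to $a^{\xi(n+k)}$, yielding $a^{\xi(n+k)}q^{k^2}{n-1\brack k}(ab)^{\lfloor(n-k)/2\rfloor}$. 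Adding the two contributions factors out $a^{\xi(n+k)}q^{k^2}(ab)^{\lfloor(n-k)/2\rfloor}$, and the $q$-Pascal identity ${n\brack k}={n-1\brack k}+q^{n-k}{n-1\brack k-1}$ recovers $h(n,k)$.

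The main obstacle is purely bookkeeping: the exponents of $a$ and $b$ depend on the parities of $n+k$, $n+k-1$, and $n+k+1$ simultaneously, while the floor $\lfloor(n-k)/2\rfloor$ shifts by $\xi(n+k+1)$ when $k$ changes by one, so one must check that the various parity-dependent cancellations fit together cleanly. Once this is done, the initial conditions are immediate: $g(n,0)=h(n,0)=a^{\xi(n)}(ab)^{\lfloor n/2\rfloor}$ is the weight of the unique all-squares tiling (colours alternating $a,b,a,b,\dots$), and $g(0,0)=h(0,0)=1$, so the induction closes.
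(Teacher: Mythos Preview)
Your proof is correct and follows exactly the same route as the paper's: derive the recurrence for $g(n,k)$ by conditioning on the last tile, verify that $h(n,k)$ satisfies it via the $q$-Pascal identity and the parity bookkeeping you describe, and match initial conditions. One minor slip in wording: you write ``$n$ squares and $k$ dominoes'' where you mean ``$n$ tiles, $k$ of which are dominoes'' (hence $n-k$ squares); the board length $n+k$ and the recurrence you actually write are consistent with the latter reading, so the argument itself is unaffected.
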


Now, we will give a combinatorial proof of Theorem \ref{Theo1}, i.e.,
\begin{align*}
F_{n+1}(q)=a^{\xi(n)}\sum_{l=0}^{\left\lfloor\frac{n}{2}\right\rfloor}{n-l \brack l}(ab)^{\left\lfloor\frac{n}{2}\right\rfloor-l}q^{l^2}.
\end{align*}
\textit{Combinatorial Proof of Theorem \ref{Theo1}}.
 From Theorem \ref{theocount}, $F_{n+1}(q)$  counts the  number of  tilings weighted by bicolored squares of a $n$-board.  On the other hand, let $l$ be the number of dominoes in the tiling of a $n$-board. Then there are $n-2l$ squares. Such a tiling with $n-l$ tiles, exactly $l$ of which are dominoes is
$$a^{\xi(n)}q^{l^2}{n-l\brack l}(ab)^{\left\lfloor \frac{n}{2}\right\rfloor-l}.$$
Summing over all possible $l$ gives the identity. \qedhere

\subsection{Additional Identities}
In this section, we prove several $q$-analogues of Fibonacci identities using tilings weighted by bicolored squares.

\begin{theorem}
The following equality holds for any integer $n\geqslant 0:$
\begin{align}
\sum_{k=1}^{n+1}q^kF_k(q)a^{\xi(n)\xi(k)}b^{\xi(k+1)-\xi(n)\xi(k+1)}(ab)^{\left\lfloor\frac{n-k+1}{2}\right\rfloor}=F_{n+3}(q)-a^{\xi(n)}(ab)^{\left\lfloor\frac{n+2}{2}\right\rfloor}. \label{id1}
\end{align}
\end{theorem}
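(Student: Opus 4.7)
The plan is to give a combinatorial proof using the tiling model of Theorem \ref{theocount}, in the same spirit as the argument given for Theorem \ref{Theo1}.

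First, I would reinterpret the right-hand side as the total weight of those $(n+2)$-tilings that contain at least one domino. By Theorem \ref{theocount}, $F_{n+3}(q)$ enumerates all weighted tilings of the $(n+2)$-board. I would then identify the subtracted term $a^{\xi(n)}(ab)^{\lfloor(n+2)/2\rfloor}$ with the weight of the unique all-squares tiling: the $(n+2)$-board has $\lceil(n+2)/2\rceil$ cells at odd positions (each contributing a factor $a$) and $\lfloor(n+2)/2\rfloor$ cells at even positions (each contributing a factor $b$), and a short check on the parity of $n$ confirms that the two expressions agree in both cases.

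Next, I would partition the remaining $(n+2)$-tilings according to the position of the \emph{rightmost} domino. Suppose that domino covers cells $(k,k+1)$; then $k$ ranges over $\{1,\dots,n+1\}$, and the tiling decomposes canonically into three independent pieces: an arbitrary weighted tiling of the $(k-1)$-board on cells $1,\dots,k-1$, contributing the weight $F_k(q)$ by Theorem \ref{theocount}; the rightmost domino itself, which lies on the $k$-th boundary and contributes $q^k$; and a forced run of squares on cells $k+2,\dots,n+2$.

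The main technical step, and the main obstacle, is to verify that the weight of this trailing all-squares run equals $a^{\xi(n)\xi(k)}b^{\xi(k+1)-\xi(n)\xi(k+1)}(ab)^{\lfloor(n-k+1)/2\rfloor}$. This is a parity bookkeeping exercise with four cases. When $n$ and $k$ have opposite parity the run has $(n-k+1)/2$ cells of each parity, giving $(ab)^{(n-k+1)/2}$ with no extra $a$- or $b$-factor; when $n$ and $k$ are both even, the first cell $k+2$ is even and the run has one more even-indexed cell than odd, producing an extra factor of $b$; when both are odd, $k+2$ is odd and the run carries an extra $a$. Using $\xi(k+1)=1-\xi(k)$ these four cases collapse to the claimed monomial. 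Summing the three-piece weights over $k=1,\dots,n+1$ reproduces the LHS, and adding back the all-squares contribution recovers $F_{n+3}(q)$; rearrangement yields the identity.
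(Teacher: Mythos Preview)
Your proposal is correct and follows essentially the same combinatorial argument as the paper: interpret $F_{n+3}(q)-a^{\xi(n)}(ab)^{\lfloor(n+2)/2\rfloor}$ as the weight of $(n+2)$-tilings with at least one domino, condition on the position $(k,k+1)$ of the last domino, and read off $q^k$, $F_k(q)$, and the trailing all-squares monomial. The paper's proof is terser and omits the four-case parity verification you supply, but the strategy is identical.
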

\begin{proof}
There exists $F_{n+3}(q)-a^{\xi(n)}(ab)^{\left\lfloor\frac{n+2}{2}\right\rfloor}$ tilings weighted by bicolored squares of a $n+2$-board with at least one domino. On the other hand, consider the location of the last domino, say position $(k,k+1)$. This domino contributes a $q^k$ to the weight, all tiles to the right are squares and contribute (it depends of the parity of the numbers $k$ and $n$) an $a^{\xi(n)\xi(k)}b^{\xi(k+1)(1-\xi(n))}(ab)^{\left\lfloor\frac{n-k+1}{2}\right\rfloor}$ to the weight.  Moreover, there are $F_k(q)$ ways to tile the left side ($k-1$-board). Summing over all possible $k$ gives the identity.
\end{proof}

 Note that if we consider the limit when $q$ tends to $1$ in above theorem  we obtain the new identity
 $$\sum_{k=1}^{n+1}t_ka^{\xi(n)\xi(k)}b^{\xi(k+1)-\xi(n)\xi(k+1)}(ab)^{\left\lfloor\frac{n-k+1}{2}\right\rfloor}=t_{n+3}-a^{\xi(n)}(ab)^{\left\lfloor\frac{n+2}{2}\right\rfloor}.$$

Equation (\ref{id1}) is a $q$-analogue of the following Fibonacci identity (\cite{koshy}):
$$
\sum_{k=0}^{n}F_k=F_{n+2}-1.
$$

The following theorem is a $q$-analogue of the Fibonacci identity (\cite{koshy}):
$$\sum_{k=0}^{n}F_{2k}=F_{2n+1}.$$

\begin{theorem} The following equality holds for any integer $n\geqslant 0:$
\begin{align}
a\sum_{k=0}^{n}F_{2k+1}(q)q^{n^2+n-(k^2+k)}=F_{2n+2}(q).
\end{align}
\end{theorem}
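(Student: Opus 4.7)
The plan is to give a combinatorial proof using the tiling model from Theorem~\ref{theocount}, interpreting $F_{2n+2}(q)$ as the weighted count of tilings of a $(2n+1)$-board and then classifying tilings by the position of the rightmost square. This is the natural next identity to prove by tiles, following the pattern already set in this subsection.

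By Theorem~\ref{theocount}, $F_{2n+2}(q)$ equals the sum of the weights over all tilings in $\T_{2n+1}$. Because $2n+1$ is odd, no tiling can consist entirely of dominoes, so a rightmost square always exists. If that square occupies cell $j$, then cells $j+1,\ldots,2n+1$ are covered exclusively by dominoes, which forces $2n+1-j$ to be even. Hence $j=2k+1$ for some $k\in\{0,1,\ldots,n\}$. This decomposition is clearly exhaustive and disjoint.

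Next, I would compute the contribution of each class $k$. The rightmost square sits at the odd position $2k+1$ and contributes a factor of $a$. The cells to its right are covered by the $n-k$ dominoes occupying boundaries $2k+2,2k+4,\ldots,2n$, so the product of their weights is $q^{2(k+1)+2(k+2)+\cdots+2n}$, and the routine arithmetic $\sum_{i=k+1}^{n}2i=(n+k+1)(n-k)=n^2+n-k^2-k$ gives the $q$-exponent appearing in the statement. The left portion is a $(2k)$-board, which by Theorem~\ref{theocount} contributes $F_{2k+1}(q)$. Summing the product of the three factors over $k=0,1,\ldots,n$ yields exactly the left-hand side of the identity.

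I do not expect a real obstacle. The only things to double-check are the extreme cases: $k=n$ corresponds to an empty domino block on the right (weight $q^0=1$) and produces the term $aF_{2n+1}(q)$, while $k=0$ pairs the empty $0$-board (tiled in $F_1(q)=1$ way) with a square at position $1$ followed by $n$ dominoes. Both are consistent with the general summand, confirming that the decomposition by rightmost square gives a bijective proof.
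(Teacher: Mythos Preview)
Your proof is correct and follows essentially the same approach as the paper: both classify tilings of the $(2n+1)$-board by the position of the rightmost square, note that this position must be odd (hence contributes a factor $a$), compute the weight of the forced block of dominoes to its right, and identify the remaining left portion as a $(2k)$-board counted by $F_{2k+1}(q)$. The only difference is notational---you index the sum by $k$ with position $2k+1$, while the paper indexes by the odd position itself---so the arguments are effectively identical.
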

\begin{proof}
There exists $F_{2n+2}(q)$ tilings weighted by bicolored squares of a $2n+1$-board. On the other hand, consider the location of the last square, say position $k$.  Since the length of the board is odd then $k$ is odd and it contributes an $a$ to the weight. The dominoes to the right contributes a $q^{(k+1)+(k+3)+\cdots+(2n)}=q^{n^2+n-(k^2-1)/4}$ to the weight.  Moreover, there are $F_k(q)$ ways to tile the left side ($k-1$-board). Summing over all possible odd $k$ gives the identity.
\end{proof}

If we consider the limit when $q$ tends to $1$ in above theorem  we obtain the identity
 $$a\sum_{k=0}^{n}t_{2k+1}=t_{2n+2}.$$

 We need the following shifted $q$-biperiodic Fibonacci sequence.

\begin{align*}
&F^{(s)}_{0}(q)=0, \quad F^{(s)}_{1}(q)=1,  \quad \\
&F^{(s)}_{n}(q)=\begin{cases} a^{\xi(s+1)}b^{1-\xi(s+1)}F^{(s)}_{n-1}(q) + q^{n-2+s}F^{(s)}_{n-2}(q),  \ \text{if} \ n\equiv 0 \pmod 2;\\
a^{1-\xi(s+1)}b^{\xi(s+1)}F^{(s)}_{n-1}(q) + q^{n-2+s}F^{(s)}_{n-2}(q),  \ \text{if}  \  n\equiv 1\pmod 2;
\end{cases} \ n\geqslant 2. \label{eqq1}
\end{align*}

A tiling of a $n$-board is breakable at cell $k$, if the tiling can be decomposed into two tilings, one covering cells 1 through $k$ and the other covering cells $k+1$ through $n$. Moreover, a tiling of a $n$-board is unbreakable at cell $k$ if a domino occupies cell $k$ and $k+1$ (cf. \cite{Benja}).

It is not difficult to show that $F^{(s)}_{n+1}(q)$ counts the latter position of the tilings weighted by bicolored squares of a $n+s-1$-board that can be breakable at cell $s$.

\begin{theorem}
The following equality holds for any integers $n, m \geqslant 0:$
\begin{align}
F_{n+m+1}(q)=F_{m+1}(q)F_{n+1}^{(m)}(q) + q^mF_{m}(q)F_{n}^{(m+1)}(q).
\end{align}
\end{theorem}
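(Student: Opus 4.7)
The plan is to exploit the combinatorial interpretation developed in the section and mimic the classical breakable/unbreakable dichotomy used for ordinary Fibonacci identities. By Theorem \ref{theocount}, the left-hand side $F_{n+m+1}(q)$ counts all tilings weighted by bicolored squares of an $(n+m)$-board. I partition this set according to whether a given tiling is breakable or unbreakable at cell $m$, and show that the two subcounts equal the two terms on the right-hand side respectively.

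First I would handle the breakable case. If the tiling is breakable at cell $m$, it decomposes uniquely into a tiling of cells $1,\dots,m$ and a tiling of cells $m+1,\dots,n+m$. The left piece is a standard weighted $m$-board tiling and contributes $F_{m+1}(q)$. The right piece has $n$ cells but their positions are shifted by $m$: a square at position $m+i$ contributes weight $a$ when $m+i$ is odd and $b$ when $m+i$ is even, and a domino across positions $m+i$ and $m+i+1$ contributes $q^{m+i}$. This is precisely the weighting that the recurrence for $F^{(m)}_{n+1}(q)$ is designed to track, so this case contributes $F_{m+1}(q)\,F^{(m)}_{n+1}(q)$.

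Next I would handle the unbreakable case. Here a single domino occupies cells $m$ and $m+1$, contributing the factor $q^m$ to the weight. The remaining cells form two independent boards: cells $1,\dots,m-1$, counted with total weight $F_m(q)$, and cells $m+2,\dots,n+m$, an $(n-1)$-cell piece with positions shifted by $m+1$. By the same argument, the weighted count of the right piece is $F^{(m+1)}_n(q)$. Multiplying these three factors yields $q^m F_m(q)\,F^{(m+1)}_n(q)$. Adding the two cases gives the identity.

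The main obstacle is justifying carefully that $F^{(s)}_{n+1}(q)$ really is the weighted count of tilings of a right-portion starting at cell $s+1$. This amounts to repeating the argument of Theorem \ref{theocount} in the shifted setting: condition on whether the last tile is a square (contributing $a$ or $b$ according to the parity of $s+n$, which explains the $\xi(s+1)$ toggle in the defining recurrence) or a domino (contributing $q^{(s+n)-1}=q^{n-2+s}\cdot q$, matching the $q^{n-2+s}$ weight in the recurrence), and then check the base cases. Once this interpretation is in hand, the identity follows immediately from the case analysis above.
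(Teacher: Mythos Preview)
Your proposal is correct and follows exactly the paper's argument: interpret $F_{n+m+1}(q)$ via Theorem~\ref{theocount} as weighted tilings of an $(n+m)$-board, split into the breakable and unbreakable cases at cell $m$, and identify the two resulting products using the shifted sequence $F^{(s)}$. Your write-up is in fact more detailed than the paper's, since you spell out why $F^{(s)}_{n+1}(q)$ enumerates the right-hand piece (the paper simply asserts this just before the theorem); the small index bookkeeping in that paragraph should be double-checked, but the argument is the intended one.
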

\begin{proof}
There exists $F_{n+m+1}(q)$ tilings weighted by bicolored squares of a $n+m$-board. On the other hand, we will consider two cases. If a $n+m$-tiling is breakable at cell $m$, we have $F_{m+1}(q)$ ways to tile a $m$-board (left side) and $F_{n+1}^{(m)}(q)$ ways to tile the $n$-board (right side).   If a $n+m$-tiling is unbreakable at cell $m$   then there is a domino in position $(m, m+1)$. It  contributes  a $q^{m}$ to the weight.  Moreover, there are  $F_{m}(q)$ ways to tile a $m-1$-board (left side) and $F_{n}^{(m+1)}(q)$ ways to tile the $n-1$-board (right side).
\end{proof}
The above theorem is a $q$-analogue of the Fibonacci identity (\cite{koshy}):
$$F_{m+n}=F_mF_n+F_{m-1}F_{n-1}.$$

The following theorem is a $q$-analogue of the biperiodic Fibonacci identity (see Theorem 7 of \cite{edson1}):
$$\sum_{k=0}^n\binom nk a^{\xi(k)}(ab)^{\left\lfloor \frac k2 \right\rfloor}t_k=t_{2n}.$$

\begin{theorem}
The following equality holds for any integer $n\geqslant 0:$
\begin{align}
F_{2n}(q)=\sum_{k=1}^na^{\xi(k)}q^{(n-k)^2} {n\brack k}(ab)^{\left\lfloor \frac k2 \right\rfloor} F_{k}^{(2n-k)}(q).
\end{align}
\end{theorem}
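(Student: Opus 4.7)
The plan is to give a combinatorial proof using the bicolored-square tilings of this section. By Theorem~\ref{theocount}, $F_{2n}(q)$ equals the total weight of all tilings of a $(2n-1)$-board, and the key observation is that every such tiling uses at least $n$ tiles: writing $s$ for the number of squares and $d$ for the number of dominoes, the relation $s+2d=2n-1$ forces $s$ to be a positive odd integer, so the tile count $s+d=(s+2n-1)/2$ is at least $n$.

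This lets me partition the tilings according to the composition of their first $n$ tiles (counting from the left). Suppose those comprise $k$ squares and $n-k$ dominoes; then $k\in\{1,\dots,n\}$, they cover cells $1,\dots,2n-k$, and the remaining tiles occupy the sub-board on cells $2n-k+1,\dots,2n-1$, of length $k-1$. The first step is to compute the total weight of the first-$n$-tile piece. This is exactly Lemma~\ref{lema} applied with total-tile parameter $n$ and domino parameter $n-k$; using $\xi(2n-k)=\xi(k)$ and ${n\brack n-k}={n\brack k}$, it yields
\[
a^{\xi(k)}\,q^{(n-k)^2}\,{n\brack k}(ab)^{\lfloor k/2\rfloor},
\]
which is precisely the coefficient appearing on the right-hand side of the identity.

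The second step is to show that the weight of the right-hand sub-board (cells $2n-k+1,\dots,2n-1$, with squares weighted $a$ or $b$ according to the parity of their absolute position and a domino across $(i,i+1)$ weighted $q^i$) equals $F^{(2n-k)}_k(q)$. A last-tile decomposition on that sub-board says that either the last tile is a square at the odd position $2n-1$ contributing an $a$, or it is a domino across $(2n-2,2n-1)$ contributing $q^{2n-2}=q^{(k-2)+(2n-k)}$. One checks that the linear coefficient and the $q$-shift appearing in the defining recurrence of $F^{(s)}_k(q)$ at $s=2n-k$ match exactly these values, and that iterating produces the alternating weights $a,b,a,b,\dots$ at the successive odd/even positions $2n-1,2n-2,2n-3,\dots$ Combined with the initial condition $F^{(s)}_1(q)=1$ (empty sub-board of length $0$), this identifies the right-portion weight with $F^{(2n-k)}_k(q)$, and summing the product over $k=1,\dots,n$ yields the identity.

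The main obstacle I expect is this second step, because $F^{(s)}_k(q)$ is specified only by its recurrence and its linear coefficients toggle with parity in a way that depends on both $s$ and $k$. Verifying that the toggling for $s=2n-k$ matches the alternation of absolute positions $2n-1,2n-2,\dots$ as one strips tiles off the right end takes some bookkeeping with $\xi$. Once this correspondence is set up, the boundary case $k=1$, where the first $n$ tiles fill the entire board and $F^{(2n-1)}_1(q)=1$, fits with no further work.
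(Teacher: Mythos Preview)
Your proposal is correct and follows essentially the same approach as the paper's own proof: interpret $F_{2n}(q)$ as the weight of $(2n-1)$-tilings, condition on the number $k$ of squares among the first $n$ tiles, apply Lemma~\ref{lema} (with the simplifications $\xi(2n-k)=\xi(k)$ and ${n\brack n-k}={n\brack k}$) for the left piece, and identify the right piece of length $k-1$ with $F^{(2n-k)}_k(q)$. The only difference is that you spell out the verification of this last identification via the recurrence of $F^{(s)}_m(q)$, whereas the paper simply invokes the combinatorial description of $F^{(s)}_{n+1}(q)$ stated just before the theorem.
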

\begin{proof}
There exists $F_{2n}(q)$ tilings weighted by bicolored squares of a $2n-1$-board. On the other hand, note that a $2n-1$-tiling have to include at least $n$ tiles, and one of them if a square. If  a $2n-1$-tiling have $k$ squares and $n-k$ dominoes  among the first $n$ tiles, then  by Lemma  \ref{lema}  there are $$a^{\xi(2n-k)}q^{(n-k)^2}{n\brack n-k}(ab)^{\left\lfloor \frac{k}{2}\right\rfloor}=a^{\xi(k)}q^{(n-k)^2}{n\brack k}(ab)^{\left\lfloor \frac{k}{2}\right\rfloor}$$
ways to tile this board. The remainder right board has length $k-1$ and can be tile $F_{k}^{(2n-k)}(q)$ ways.
\end{proof}

\begin{theorem}
The following equality holds for any integer $n\geqslant 0:$
\begin{align}
F_{2n+2}(q)=a\sum_{i=0}^n\sum_{j=0}^n(ab)^{\xi(n-i-j)}q^{i^2+(n+i+j)j} {n-j\brack i}{n-i\brack j}(ab)^{2\left\lfloor \frac{n-i-j}{2} \right\rfloor}.
\end{align}
\end{theorem}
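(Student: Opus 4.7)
My plan is to give a combinatorial proof in the style of the preceding identities in this section. By Theorem \ref{theocount}, $F_{2n+2}(q)$ enumerates the tilings weighted by bicolored squares of a $(2n+1)$-board, and my goal is to exhibit the double sum on the right-hand side as a refined enumeration of this same set, indexed by two parameters $(i,j)$ which record how the dominoes are distributed between two canonically chosen sub-blocks.

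First I would peel off a distinguished square to account for the external factor $a$: because the board has odd length $2n+1$, every tiling contains at least one square, and designating a canonical one (for instance, the rightmost square) carries out this factorization while leaving an effective $2n$-cell configuration to be refined. I would then split the remaining tiling into a left block consisting of $n-j$ tiles with $i$ dominoes and $n-j-i$ squares, and a right block consisting of $n-i$ tiles with $j$ dominoes and $n-i-j$ squares; the cell counts $(n-j+i)+(n-i+j)=2n$ match exactly. Lemma \ref{lema} applied to each block delivers the two $q$-binomials ${n-j \brack i}$ and ${n-i \brack j}$ together with the $(ab)$-factor, which simplifies to $(ab)^{n-i-j}=(ab)^{\xi(n-i-j)}(ab)^{2\lfloor (n-i-j)/2 \rfloor}$ after unfolding the parity bookkeeping of the squares (note that whenever $i+j>n$ the binomials already vanish, so the apparent negative exponent is harmless). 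The $q$-exponent $i^2+(n+i+j)j$ would then arise as the total boundary label contributed by the $i+j$ dominoes: the $i$ left-block dominoes, packed so as to sit at the beginning of the board, contribute $1+3+\cdots+(2i-1)=i^2$, while the $j$ right-block dominoes, whose leftmost cell sits at position $n+i+j+1$, contribute the progression summing to $(n+i+j)j$.

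The main obstacle I anticipate is to pin down the canonical splitting precisely and to verify that it gives an honest bijection with all $(2n+1)$-tilings --- in particular, that the $\xi(n-i-j)$ parity tracking and the global shift applied to the right-block domino labels are accounted for consistently across the range of $(i,j)$. As a cross-check, I would verify the identity algebraically: substituting Theorem \ref{Theo1} into $F_{2n+2}(q)$ produces the single sum $a\sum_{l=0}^n {2n+1-l \brack l}(ab)^{n-l}q^{l^2}$, and grouping the right-hand side according to $l=i+j$ reduces the claim to a $q$-Vandermonde-type identity of the form
\[
{2n+1-l \brack l}=\sum_{j=0}^{l} q^{j(j+n-l)} {n-j \brack l-j}{n-l+j \brack j},
\]
which one may establish by induction on $l$ using the standard recurrence ${n \brack j}={n-1 \brack j}+q^{n-j}{n-1 \brack j-1}$. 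Matching this algebraic derivation against the combinatorial decomposition above confirms the identity.
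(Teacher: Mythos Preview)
Your overall plan is the paper's: interpret $F_{2n+2}(q)$ as the weighted count of $(2n+1)$-tilings, single out one square to extract the factor $a$, and apply Lemma~\ref{lema} to each of the two remaining blocks. The piece you left open---``pin down the canonical splitting precisely''---is exactly where the argument lives, and your tentative choice of the \emph{rightmost} square does not work: once the rightmost square is removed, everything to its right is dominoes, so the right block can never carry $n-i-j>0$ squares, and the desired $(n-j)$-tile/$(n-i)$-tile split is impossible. The paper's choice is the \emph{median} square: since the number of squares in a $(2n+1)$-tiling is odd, there is a unique square with equally many squares to its left and to its right. With $i$ and $j$ the numbers of dominoes on the two sides of this median square, each side carries $n-i-j$ squares, hence $n-j$ tiles on the left and $n-i$ tiles on the right; Lemma~\ref{lema} then supplies ${n-j\brack i}$, ${n-i\brack j}$ and the $(ab)$-powers, while the shift of the right block accounts for the cross term in the $q$-exponent.

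One caution: the paper's proof actually produces the domino exponent $i^{2}+(n+i+1)j$ (left block gives $q^{i^{2}}$; right block, which starts at cell $n+i-j+2$, gives $q^{j^{2}+(n+i-j+1)j}=q^{(n+i+1)j}$), not $i^{2}+(n+i+j)j$ as printed in the statement. These agree only for $j\le 1$; already at $n=2$, $(i,j)=(0,2)$ the printed exponent yields $q^{8}$, yet $F_{6}(q)$ has $q$-degree $6$. Correspondingly, the $q$-Vandermonde you wrote down,
\[
{2n+1-l\brack l}=\sum_{j=0}^{l} q^{j(j+n-l)} {n-j\brack l-j}{n-l+j\brack j},
\]
fails at $n=l=2$ (left side $1+q+q^{2}$, right side $1+q+q^{4}$). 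Your algebraic back-up is a pleasant alternative the paper does not pursue, but it must be rerun with the corrected exponent: grouping by $l=i+j$ then reduces the claim to
\[
{2n+1-l\brack l}=\sum_{j=0}^{l} q^{j(n-l+1)} {n-j\brack l-j}{n-l+j\brack j},
\]
which is a genuine $q$-Vandermonde--type identity and does check (e.g.\ $n=l=2$ gives $1+q+q^{2}$ on both sides).
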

\begin{proof}
There exists $F_{2n+2}(q)$ tilings weighted by bicolored squares of a $2n+1$-board. On the other hand, note that a $2n+1$-tiling have an odd number of squares, then there is a square such that the number of squares to the left side of it is equal to the number of squares of the right side. This square is called median square and  it  contributes  an $a$ to the weight.  We will count the number of tilings contain exactly $i$ dominoes to the left of the median square and exactly $j$ dominoes to the right of the median square.  A tiling of a $2n+1$-board with $i+j$ dominoes  have $2n+1-2i-2j$ squares, then there are $n-i-j$ squares on each side of the median square. Since the left side has $n-j$  tiles $i$ of which are dominoes, then there are
 $$a^{\xi(n-j+i)}q^{i^2}{n-j\brack i}(ab)^{\left\lfloor \frac{n-i-j}{2}\right\rfloor}$$
 ways  to tile this board.   Analogously, the right side can be tile by
   $$b^{\xi(n-j+i)}q^{(n+i+1)j}{n-i\brack j}(ab)^{\left\lfloor \frac{n-i-j}{2}\right\rfloor}$$
   ways.
\end{proof}
The above theorem is a $q$-analogue of the Fibonacci identity (\cite{koshy})
$$\sum_{i=0}^n\sum_{j=0}^n\binom{n-i}{j}\binom{n-j}{i}=F_{2n+1}.$$

\section{Open Questions}

We lead the following open questions about $q$-biperiodic Fibonacci sequence.

Schur \cite{Schur} proved that
\begin{align*}
D_\infty=\prod_{n\geqslant 0}\frac{1}{(1-q^{5n+1})(1-q^{5n+4})},
\end{align*}
where $D_\infty=\lim_{n\to \infty}D_{n}(q)$. Is there any analogue to $F_{\infty}(q)$, where $F_\infty=\lim_{n\to \infty}F_{n}(q)$?

 The following is one of the Rogers-Ramanujan identities:

\begin{multline*}
\sum_{n\geqslant0}\frac{q^{n^2+mn}}{(1-q)(1-q^2)\cdots (1-q^n)}=(-1)^mq^{-\binom n2}E_{m-2}(q)\prod_{n\geqslant 0}\frac{1}{(1-q^{5n+1})(1-q^{5n+4})}\\ - (-1)^mq^{-\binom n2}D_{m-2}(q)\prod_{n\geqslant 0}\frac{1}{(1-q^{5n+2})(1-q^{5n+3})},
\end{multline*}
where $E_n(q)$ is the sequence defined as $E_n(q)=E_{n-1}(q)+ q^{n-2}E_{n-2}(q)$, with the initial conditions  $E_0(q)=1, E_1(q)=0$. Is there a similar identity that involves the sequences $F_n(q)$ and $\widehat{F}_{n}(q)$?\\

Andrews \cite{Andrews4} proved that
\begin{align*}
D_{n+1}(q)= \sum_{j=-\infty}^{\infty}(-1)^jq^{j(5j+1)/2}{n\brack \left\lfloor \frac{n-5j}{2} \right\rfloor}.
\end{align*}
Is there any analogue  identity to the sequence $F_n(q)$?

\section{Acknowledgements}

The first author was partially supported by Universidad Sergio Arboleda under Grant no. DII-262.
The second author thanks  the invitation to Bogot\'a where the mayor part of this paper was done.

\end{document}